\newcommand{\ve}{{\bf e}}
\newcommand{\cset}{\mathbb C}
\newcommand{\gset}{\mathbb G}
\newcommand{\rset}{\mathbb R}
\newcommand{\qset}{\mathbb Q}
\newcommand{\dcset}{\hat{\mathbb C}}
\newcommand{\dgset}{\hat{\mathbb G}}
\newcommand{\drset}{\hat{\mathbb R}}
\newcommand{\dqset}{\hat{\mathbb Q}}
\newcommand{\vdq}[1]{\mathbf{#1}}
\begin{document}
	\large
	
	\title{{Eigenvalues of Dual Hermitian Matrices with Application in Formation Control}}
	\author{ Liqun Qi\footnote{Department of Mathematics, School of Science, Hangzhou Dianzi University, Hangzhou 310018 China; Department of Applied Mathematics, The Hong Kong Polytechnic University, Hung Hom, Kowloon, Hong Kong
			({\tt maqilq@polyu.edu.hk}).}
		\and \
		Chunfeng Cui\footnote{LMIB of the Ministry of Education, School of Mathematical Sciences, Beihang University, Beijing 100191 China.
			({\tt chunfengcui@buaa.edu.cn}).}
	}
	\date{\today}
	\maketitle

	\begin{abstract}
		We propose a supplement matrix method for computing eigenvalues of a dual Hermitian matrix, and discuss its application in multi-agent formation control.
		Suppose we have a ring, which can be the real field, the complex field, or the quaternion ring.
		{We study dual number symmetric matrices, dual complex Hermitian matrices and dual quaternion Hermitian matrices in a unified frame of dual Hermitian matrices.
			An} $n \times n$ dual Hermitian matrix has $n$ dual number eigenvalues.
		We define determinant, characteristic polynomial  and  supplement matrices for a dual Hermitian matrix.   Supplement matrices are Hermitian matrices in the original ring.  The standard parts of the eigenvalues of that dual Hermitian matrix are the eigenvalues of the standard part Hermitian matrix in the original ring, while the dual parts of the eigenvalues of that dual Hermitian matrix are the eigenvalues of those  {supplement} matrices.    Hence, by {applying} any practical method for computing eigenvalues of Hermitian matrices in the original ring, we have a practical method for computing eigenvalues of a dual Hermitian matrix.   We call this method the supplement matrix method.   In multi-agent formation control, a desired relative configuration scheme may be given.   People need to know if this scheme is reasonable such that a feasible solution of configurations of these multi-agents exists.   By exploring the eigenvalue problem of dual Hermitian matrices, and its link with the unit gain graph theory, we open a cross-disciplinary approach to solve the relative configuration problem.
		Numerical experiments are reported.

		\medskip


		\textbf{Key words.} Dual number, dual quaternion, dual Hermitian matrix, eigenvalue,  {supplement} matrix, formation control.
		
	\end{abstract}

	\renewcommand{\Re}{\mathds{R}}
	\newcommand{\rank}{\mathrm{rank}}
	\newcommand{\X}{\mathcal{X}}
	\newcommand{\A}{\mathcal{A}}
	\newcommand{\I}{\mathcal{I}}
	\newcommand{\B}{\mathcal{B}}
	\newcommand{\C}{\mathcal{C}}
	\newcommand{\OO}{\mathcal{O}}
	\newcommand{\e}{\mathbf{e}}
	\newcommand{\0}{\mathbf{0}}
	\newcommand{\dd}{\mathbf{d}}
	\newcommand{\ii}{\mathbf{i}}
	\newcommand{\jj}{\mathbf{j}}
	\newcommand{\kk}{\mathbf{k}}
	\newcommand{\va}{\mathbf{a}}
	\newcommand{\vb}{\mathbf{b}}
	\newcommand{\vc}{\mathbf{c}}
	\newcommand{\vq}{\mathbf{q}}
	\newcommand{\vg}{\mathbf{g}}
	\newcommand{\pr}{\vec{r}}
	\newcommand{\ps}{\vec{s}}
	\newcommand{\pt}{\vec{t}}
	\newcommand{\pu}{\vec{u}}
	\newcommand{\pv}{\vec{v}}
	\newcommand{\pw}{\vec{w}}
	\newcommand{\pp}{\vec{p}}
	\newcommand{\pq}{\vec{q}}
	\newcommand{\pl}{\vec{l}}
	\newcommand{\vt}{\rm{vec}}
	\newcommand{\vx}{\mathbf{x}}
	\newcommand{\vy}{\mathbf{y}}
	\newcommand{\vu}{\mathbf{u}}
	\newcommand{\vv}{\mathbf{v}}
	\newcommand{\y}{\mathbf{y}}
	\newcommand{\vz}{\mathbf{z}}
	\newcommand{\T}{\top}
	
	\newtheorem{Thm}{Theorem}[section]
	\newtheorem{Def}[Thm]{Definition}
	\newtheorem{Ass}[Thm]{Assumption}
	\newtheorem{Lem}[Thm]{Lemma}
	\newtheorem{Alg}[Thm]{Algorithm}
	\newtheorem{Prop}[Thm]{Proposition}
	\newtheorem{Cor}[Thm]{Corollary}
	\newtheorem{example}[Thm]{Example}
	\newtheorem{remark}[Thm]{Remark}
	
	\section{Introduction}
	
	Dual numbers, dual quaternions, their vectors and matrices, as well as their applications have a long history.   It was British mathematician William Kingdon Clifford who introduced dual numbers in 1873 \cite{Cl73}.  Then German mathematician Eduard Study introduced dual angles in 1903 \cite{St03}.   These started the study and applications of dual numbers, dual number vectors and dual number matrices in kinematics, dynamics and robotics \cite{An12, Fi98, GL87, PS07, PV09, Ud21, Wa21}.  Later, dual quaternions, in particular, unit dual quaternions, found wide applications in hand-eye calibration, neuroscience, multi-agent formation control and simultaneous location and mapping (SLAM), etc., \cite{BK20, BLH19, CKJC16,Da99,LLB13,QC24,QWL23,WYL12}.  Among these, dual quaternion matrices are used in multi-agent formation control \cite{QC24, QWL23}. Very recently, dual complex matrices found applications in brain science \cite{WDW24}.

	In 2007,  Pennestr\`i and Stefanelli \cite{PS07} proposed  {the problem of} 
	computing eigenvalues and singular values of dual number matrices.   Also see \cite{PV09}.    In 2023, Qi and Luo {\cite{QL23}}  showed that an $n \times n$ dual quaternion Hermitian matrix {has $n$ dual number eigenvalues}, and established the singular value decomposition of a dual quaternion matrix.   These also apply to dual number matrices and dual complex matrices \cite{QC23}.  The relative configuration adjacent matrices and 
	Laplacian matrices in multi-agent formation control are dual quaternion Hermitian matrices \cite{QC24, QWL23}.  
	
	Then, several numerical methods for computing eigenvalues of dual quaternion Hermitian matrices arose.  These include a power method \cite{CQ23}, a bidiagonalization method \cite{DLWW24} and a Rayleigh quotient iteration method \cite{DWD24}.
	
	
	{We study dual number symmetric matrices, dual complex Hermitian matrices and dual quaternion Hermitian matrices in a unified frame of dual Hermitian matrices.} Suppose we have a ring, which can be the real field, the complex field, or the quaternion ring.  Then an $n \times n$ dual Hermitian matrix has $n$ dual number eigenvalues.   The trouble {of finding} these $n$ dual number eigenvalues occurs when the standard part of the dual Hermitian matrix has multiple eigenvalues.   {In this case, the characteristic polynomial of that dual Hermitian matrix has infinitely many roots, which are not eigenvalues of that dual Hermitian matrix in general.}   This may make a computational method divergent or slow.
	
	In this paper, we  define {supplement} matrices for a dual Hermitian matrix.  {Supplement} matrices are Hermitian matrices in the original ring.  The standard parts of the eigenvalues of that dual Hermitian matrix are the eigenvalues of the standard part Hermitian matrix in the original ring, while the dual parts of the eigenvalues of that dual Hermitian matrix are the eigenvalues of those  {supplement} matrices.    Hence, by apply any practical method for computing eigenvalues of Hermitian matrices in the original ring, we have a practical method for computing eigenvalues of a dual Hermitian matrix.   {We call this method the {\bf supplement matrix method}.}
	
	Then we study the relative configuration problem in multi-agent formation control.   People need to know if a given desired relative configuration scheme is reasonable such that a feasible solution of configurations of these multi-agents exists.  By combining the eigenvalue problem of dual Hermitian matrices, with the unit gain graph theory, we open a cross-disciplinary approach to solve the relative configuration problem.  Then the supplement matrix method is used for this approach.
	
	In the next section, we review some basic knowledge of dual elements and dual matrices in a ring.  That ring can be the real field, the complex field, or the quaternion ring.   We also review some knowledge about unit dual quaternions there.  This is useful for studying multi-agent formation control.	In Section 3, we  {define the determinants  and characteristic polynomials of dual Hermitian matrices and} show that when the standard part of a dual Hermitian matrix has multiple eigenvalues, the characteristic polynomial of that dual Hermitian matrix may have infinitely many roots, which are not eigenvalues of that dual Hermitian matrix in general.
	We define supplement matrices, construct  the supplement matrix method and
	prove that it does find all eigenvalues of a dual Hermitian matrix in Section 4.     In {Section 5}, we study the relative configuration problem in formation control.   We explore the cross-disciplinary approach to solve the relative configuration problem in Section 6.
	Numerical experiments are reported in Section 7. {Some concluding remarks are made in Section 8.}


	\section{Dual Elements and Dual Matrices}
	
	\subsection{Dual Elements and Unit Dual Quaternions}
	The field of real numbers, the field of complex numbers and the ring of quaternions are denoted  by  $\rset, \cset$ and $\qset$, respectively.   We use $\gset$ to represent them.   Thus, $\gset$ may be $\rset$ or $\cset$ or $\qset$.   We use $\drset, \dcset$ and $\dqset$ to denote the ring of dual numbers, dual complex numbers and dual quaternions, respectively, and use $\dgset$ to represent them in general.   We call an element in $\dgset$ a dual element.   Thus a dual element means a dual number, or a dual complex number, or a dual quaternion, depending upon $\dgset = \drset$, or $\dcset$, or $\dqset$.   Similarly, a Hermitian matrix in $\gset$ means a symmetric matrix, or a complex Hermitian matrix, or a quaternion Hermitian matrix, depending upon $\gset = \rset$, or $\cset$, or $\qset$.
	
	{Our arguments can be generalized to the other rings.   We do not pursue this here.}

	A {\bf dual element} $a = a_s + a_d\epsilon \in \dgset$ has standard part $a_s \in \gset$ and dual part $a_d \in \gset$.    The symbol $\epsilon$ is the infinitesimal unit, satisfying $\epsilon^2 = 0$, and $\epsilon$ is commutative with numbers in $\gset$.  The {\bf conjugate} of $a$ is defined as $a^* = a_s^* + a_d^*\epsilon$, where $a_s^*$ and $a_d^*$ is the conjugates of numbers $a_s$ and $a_d$, respectively.   Note
	that if $a_s$ and $a_d$ are real numbers, then their conjugates are themselves.  Thus, the conjugate of a dual number is also itself.    If $a_s \not = 0$, then we say that $a$ is {\bf appreciable}.   Otherwise, we say that $a$ is {\bf infinitesimal}.
	
	Suppose we have two dual elements $a = a_s + a_d\epsilon$ and $b = b_s + b_d\epsilon$.   Then their sum is $a+b = (a_s+b_s) + (a_d+b_d)\epsilon$, and their product is $ab = a_sb_s + (a_sb_d+a_db_s)\epsilon$.
	In this way, $\dgset$ is a ring.   In particular, $\drset$ and $\dcset$ are two commutative rings, while $\dqset$ is a noncommutative ring.
	
	If both $a_s$ and $a_d$ are real numbers, then $a = a_s + a_d\epsilon$ is called a dual number.  Suppose we have two dual numbers $a = a_s + a_d\epsilon$ and $b = b_s + b_d\epsilon$.   By \cite{QLY22}, if
	$a_s > b_s$, or $a_s = b_s$ and $a_d > b_d$, then we say $a > b$.   Then this defines positive, nonnegative dual numbers, etc.

	For a dual element $a = a_s + a_d\epsilon \in \dgset$, its magnitude is defined as a nonnegative dual number
	$$|a| := \left\{ \begin{aligned} |a_s| + {(a_sa_d^*+a_da_s^*) \over 2|a_s|}\epsilon, & \ {\rm if}\  a_s \not = 0, \\
		|a_d|\epsilon, &  \ {\rm otherwise}.
	\end{aligned} \right.$$
	
	We   use $0$, ${\bf 0}$, and $O$ to denote a zero number, a zero vector, and a zero matrix, respectively.
	
	A dual element $a = a_s + a_d\epsilon \in \dgset$ is called invertible if there exists a dual element $b \in \dgset$ such that $ab = ba =1$.  We can derive that $a$ is invertible if and only if  $a$ is appreciable. In this case, we have
	$$a^{-1} = a_s^{-1} - a_s^{-1}a_d a_s^{-1} \epsilon.$$
	
	
	Let $q = q_s + q_d\epsilon \in \dqset$.  If $|q| = 1$, then $q$ is called a {\bf unit dual quaternion}. A unit dual quaternion $q$ is always invertible and we have $q^{-1} = q^*$.   The 3D motion of a rigid body can be represented by a unit dual quaternion.
	We have
	$$qq^* = (q_s + q_d\epsilon)(q_s^* + q_d^*\epsilon)= q_sq_s^* + (q_sq_d^* + q_d q_s^*)\epsilon = q^*q.$$
	Thus, $q$ is a unit dual quaternion if and only if $q_s$ is a unit quaternion, and
	\begin{equation} \label{udq1}
		q_sq_d^* + q_d q_s^* = q_s^*q_d + q_d^* q_s=0.
	\end{equation}
	Note that a quaternion is called an {\bf imaginary quaternion} if its real part is zero.
	Suppose that there is a rotation $q_s \in \qset$ succeeded by a translation $p^b \in {\mathbb Q}$, where $p^b$ is an imaginary quaternion.   Here, following \cite{WYL12}, we use {the superscript $b$ 
		to represent} the relation of the rigid body motion with respect to the body frame attached to the rigid body.
	Then the whole transformation can be represented using unit dual quaternion $q = q_s + q_d \epsilon$, where $q_d = {1 \over 2}q_sp^b$.   Note that we have
	$$q_sq_d^* + q_d q_s^* = {1 \over 2}\left[q_s(p^b)^*q_s^* + q_sp^bq_s^*\right] = {1 \over 2}q_s\left[(p^b)^*+p^b\right]q_s^* = 0.$$
	Thus, a transformation of a rigid body can be represented by a unit dual quaternion
	\begin{equation} \label{udq}
		q = q_s + {\epsilon \over 2}q_sp^b,
	\end{equation}
	where {$q_s$} is a unit quaternion to represent the rotation, and $p^b$ is  an imaginary quaternion to represent the translation or the position.  On the other hand, every attitude of a rigid body which is free to rotate relative to a fixed frame can be identified by a unique unit quaternion $q$.    Thus, in (\ref{udq}), $q_s$ is the attitude of the rigid body, while {$q_d$} represents the transformation.  A unit dual quaternion {$q$} serves as both a specification of the configuration of a rigid body and a transformation taking the coordinates of a point from one frame to another via rotation and translation.
	In (\ref{udq}), if  {$q$} is the configuration of the rigid body, then {$q_s$} and $p^b$ are the attitude of and position of the rigid body respectively. Denote the set of unit dual quaternions by $\hat {\mathbb U}$.

	
	
	A {\bf dual vector} is denoted by $\vx = (x_1, \cdots, x_n)^\top \in \dgset^n$.   Its conjugate is $\vx^* = (x_1^*, \cdots, x_n^*)$.  We may denote $\vx = \vx_s + \vx_d\epsilon$, where $\vx_s, \vx_d \in \gset^n$.
	Its $2$-norm is defined as
	\begin{equation}\label{2-norm}
		\|\vx\|_2 = \left\{\begin{array}{ll}
			{\|\vx_s\|_2+{\frac{\vx_s^* \vx_d+\vx_d^* \vx_s}{2\|\vx_s\|_2}}\epsilon,}
			& \ \mathrm{if} \  \vx_s \not = \0,\\
			\|\vx_d\|_2\epsilon, & \ \mathrm{if} \  \vx_s  = \0.
		\end{array}\right.
	\end{equation}
		{This}  is a dual number.  	For convenience in the numerical experiments, we also  denote
		the $2^R$-norm of a dual   vector $\mathbf x=(x_{i}) \in\hat{\mathbb G}^{n}$  as
		\begin{equation*}
			\|{\mathbf x}\|_{2^R} = \sqrt{\|\mathbf  x_{s}\|_2^2 + \|\mathbf x_{d}\|_2^2},
	\end{equation*}
	which is a real number.

	We say $\vx = \vx_s + \vx_d\epsilon = (x_1, \cdots, x_n)^\top \in \dgset^n$ is a {\bf unit dual vector} if $\|\vx\|_2=1$, or equivalently, $\|\vx_s\|_2=1$ and {$\vx_s^*\vx_d+\vx_d^* \vx_s=0$.
		If $\vx_s \not = \0$, then we say that $\vx$ is appreciable.
		The unit vectors in $\rset^n$ are denoted as $\ve_1, \cdots, \ve_n$.   They are also unit vectors of $\gset^n$ and $\dgset^n$.}

	
	Let $\vx, \vy \in \dgset^n$.  If $\vx^* \vy = 0$, then we say that $\vx$ and $\vy$ are orthogonal.  If
	$\vx^{(1)}, \cdots, \vx^{(n)} \in \dgset^n$ and $\left(\vx^{(i)}\right)^*\vx^{(j)} = \delta_{ij}$ for $i, j = 1, \cdots, n$, where $\delta_{ij}$ is the Kronecker symbol, then we say that
	$\{ \vx^{(1)}, \cdots, \vx^{(n)} \}$ is an orthonormal basis of $\dgset^n$.
	Let $\vx^{(1)} = \vx_s^{(1)} + \vx_d^{(1)}\epsilon, \dots, \vx^{(k)} = \vx_s^{(k)} + \vx_d^{(k)}\epsilon \in {\dgset}^n$.  If $\vx_s^{(1)}, \dots, \vx_s^{(k)}$ are linearly independent, then we say that $\vx^{(1)}, \dots, \vx^{(k)}$ are appreciably linearly independent.
	
	

	
	
	\subsection{Dual Matrices}
	Assume
	that $A = A_s + A_d\epsilon$ and $B = B_s + B_d\epsilon$ are two dual matrices in $\dgset^{n \times n}$, where $n$ is a positive integer, $A_s, A_d, B_s, B_d \in \gset^{n \times n}$.
	If $AB = BA = I$, where
	$I$ is the $n \times n$ identity matrix, then we say that $B$ is the {\bf inverse} of $A$ and denote that $B = A^{-1}$.  
	
	
	

	
	

	For a dual matrix $A \in \dgset^{n \times n}$, denote its conjugate transpose as $A^*$.  If $A^* = A$, then $A$ is called a {\bf dual Hermitian matrix}.  If $A^* = A^{-1}$, then $A$ is called a {\bf dual unitary matrix}. In particular, if $A$ is a dual Hermitian matrix {in} $\drset^{n \times n}$, or $\dcset^{n \times n}$, or $\dqset^{n \times n}$, respectively, then $A$ is called a dual number symmetric matrix, or a dual complex Hermitian matrix, or a dual quaternion Hermitian matrix, respectively.  If $A$ is a dual  matrix {in} $\drset^{n \times n}$, or $\dcset^{n \times n}$, or $\dqset^{n \times n}$, respectively, then $A$ is called a dual {number  matrix}, or a dual {complex   matrix,} or a dual {quaternion   matrix,} respectively.
	
	
		{
		The $F$-norm of a dual matrix $A=(a_{ij})\in \hat{\mathbb G}^{m\times n}$ is
		\begin{equation*}\label{equ:dqFnorm}
			\|A\|_F=\left\{\begin{array}{cl}
				\|A_s\|_F+\frac{tr(A_s^*A_d+A_d^*A_s)}{2\|A_s\|_F}\epsilon,   &  \text{ if }A_s\neq O, \\
				\|A_d\|_F\epsilon,   &  \text{ otherwise}.
			\end{array}\right.
		\end{equation*}
	 This is a dual number.
		For convenience in the numerical experiments, we also define  the $F^R$-norm of a dual   matrix $A=(a_{ij}) \in\hat{\mathbb G}^{m\times n}$  as
		\begin{equation*}
			\|{A}\|_{F^R} = \sqrt{\|A_s\|_F^2 + \|A_d\|_F^2},
		\end{equation*}
		which is a real number.
	}

	It is classical that an $n \times n$ real symmetric or complex Hermitian matrix has $n$ real eigenvalues, and this matrix is positive semidefinite (or definite respectively) if and only if all of these $n$ eigenvalues are nonnegative (or positive respectively).   In 1997, Zhang \cite{Zh97} extended this to quaternion Hermitian matrices.  In 2023, Qi and Luo \cite{QL23} further extended this to dual quaternion Hermitian matrices.   This is actually also true for dual symmetric or dual complex Hermitian matrices.  We now state these in our general frame.
	
	Let $A \in \dgset^{n \times n}$ be a dual Hermitian matrix and $\vx \in \dgset^n$.  Then $\vx^*A\vx$ is a dual number if $\dgset$ is either $\drset$, or $\dcset$ or $\dqset$. Thus, by \cite{QLY22}, we may distinguish that $\vx^*A\vx$ is nonnegative, or positive, or not. If for all $\vx \in \dgset^n$,  $\vx^*A\vx$ is nonnegative, then we say that $A$ is positive semidefinite.   If for all $\vx \in \dgset^n$ and appreciable,  $\vx^*A\vx$ is positive, then we say that $A$ is positive definite.
	
	Let $A \in \dgset^{n \times n}$, $\vx \in \dgset^n$ {be} appreciable, and $\lambda \in \dgset$.
	If
	\begin{equation} \label{en1}
		A\vx = \vx\lambda,
	\end{equation}
	then $\lambda$ is called a right eigenvalue of $A$, with $\vx$ {as its corresponding} right eigenvector.  If
	\begin{equation} \label{en2}
		A\vx = \lambda\vx,
	\end{equation}
	where $\vx$ is appreciable, i.e., $\vx_s \not = \0$, then $\lambda$ is called {a left} eigenvalue of $A$, with {a left} eigenvector $\vx$.
	
	If $\dgset$ is $\drset$ or $\dcset$, then the multiplication is commutative.    In these two cases, it is not {necessary} to distinguish right and left eigenvalues.  We just call them eigenvalues \cite{QC23}.   It was proved in \cite{QL23} that all the right eigenvalues of a dual quaternion Hermitian matrix $A$ are dual numbers.   As dual numbers are commutative with dual quaternions, they are also left eigenvalues.  Thus, we may simply call them {\bf eigenvalues} of $A$.  Note that $A$ may still have other left eigenvalues, which are not dual numbers.  See an example of a quaternion matrix in \cite{Zh97}.
	
	The following theorem was proved in \cite{QL23} for dual quaternion Hermitian matrices.  It is also true for dual symmetric matrices and dual complex Hermitian matrices by similar arguments \cite{QC23}.
	
	\begin{Thm}   Suppose that $A \in \dgset^{n \times n}$ is a dual symmetric matrix, or a dual complex Hermitian matrix, or a dual quaternion Hermitian matrix, then it has exactly $n$ dual number eigenvalues.   It is positive semidefinite  or definite if and only if these $n$ eigenvalues are nonnegative {or positive}.
	\end{Thm}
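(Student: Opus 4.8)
Write $A = A_s + A_d\epsilon$ with $A_s, A_d$ Hermitian in $\gset^{n\times n}$. Everything follows once we produce a \emph{spectral decomposition} $A = U\Sigma U^*$ with $U \in \dgset^{n\times n}$ dual unitary and $\Sigma = \mathrm{diag}(\lambda_1,\dots,\lambda_n)$ diagonal with dual number entries, so the plan is to build such a $U$ by three successive dual unitary conjugations. First, by the classical spectral theorem over $\rset$ and $\cset$, or by Zhang \cite{Zh97} over $\qset$, write $A_s = Q\Lambda_s Q^*$ with $Q \in \gset^{n\times n}$ unitary and $\Lambda_s$ real diagonal; viewing $Q$ as the dual unitary matrix $Q + O\epsilon$ and replacing $A$ by $Q^*AQ = \Lambda_s + (Q^*A_dQ)\epsilon$, we may assume $A_s = \Lambda_s$, and we group $\{1,\dots,n\}$ into blocks on which $\Lambda_s$ is constant; denote the diagonal entries of $\Lambda_s$ by $d_1,\dots,d_n$.

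Second, for each eigenvalue block the corresponding principal submatrix $B$ of $A_d$ is Hermitian in $\gset$, so again by \cite{Zh97} (over $\qset$) it diagonalizes as $B = P\Lambda_B P^*$ with $\Lambda_B$ real diagonal; this is exactly where realness of the dual parts comes from. Assembling the $P$'s into a block-diagonal unitary $P_0$ (which commutes with $\Lambda_s$) and conjugating, $\Lambda_s$ is unchanged while every diagonal block of the dual part becomes a real diagonal matrix; write the result as $\Lambda_s + C\epsilon$ with $C$ Hermitian and its block-diagonal part diagonal, and denote the resulting diagonal entries of that block-diagonal part by $\mu_1,\dots,\mu_n$. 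Third, set $U_1 = I + F\epsilon$, where $F$ is block-off-diagonal with $F_{ij} = C_{ij}/(d_j - d_i)$ when $i,j$ lie in different blocks and $F_{ij} = 0$ otherwise; a one-line check gives $F^* = -F$, hence $U_1$ is dual unitary with $U_1^* = I - F\epsilon$, and
$$U_1^*(\Lambda_s + C\epsilon)U_1 = \Lambda_s + (C + \Lambda_s F - F\Lambda_s)\epsilon,$$
whose off-block $(i,j)$ dual entry is $C_{ij} + (d_i - d_j)F_{ij} = 0$ by construction, while its block-diagonal part equals that of $C$, already diagonal. Thus $U_1^*(\Lambda_s + C\epsilon)U_1 = \Sigma := \mathrm{diag}(d_1+\mu_1\epsilon,\dots,d_n+\mu_n\epsilon)$ is a diagonal dual number matrix, and composing the three conjugations yields $A = U\Sigma U^*$ with $U$ a product of dual unitaries, hence dual unitary.

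Now read off the conclusions. Each column $\vx^{(i)}$ of $U$ satisfies $A\vx^{(i)} = \vx^{(i)}\lambda_i$ and is appreciable (its standard part is a column of the unitary matrix $QP_0$), so the $n$ diagonal entries $\lambda_i = d_i + \mu_i\epsilon$ of $\Sigma$ are dual number eigenvalues of $A$ with an orthonormal eigenbasis $\{\vx^{(i)}\}$. To see there are \emph{no others}, let $\lambda = \lambda_s + \lambda_d\epsilon$ be any eigenvalue with appreciable eigenvector $\vx = \vx_s + \vx_d\epsilon$. The standard part of $A\vx = \vx\lambda$ gives $A_s\vx_s = \lambda_s\vx_s$, so $\lambda_s$ is an eigenvalue of $A_s$ and $\vx_s$ lies in $V := \ker(A_s - \lambda_s I)$; the dual part gives $(A_s - \lambda_s I)\vx_d = (\lambda_d I - A_d)\vx_s$, whose left-hand side ranges over $V^\perp$, so solvability forces the projection of $(\lambda_d I - A_d)\vx_s$ onto $V$ to vanish, i.e. $P_V A_d P_V\,\vx_s = \lambda_d\vx_s$. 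Hence $\lambda_d$ is an eigenvalue of the compression of $A_d$ to $V$ --- a supplement matrix in the sense of Section 4, unitarily equivalent to the blocks $B$ above, whose eigenvalues are precisely the $\mu_i$ for which $d_i = \lambda_s$ --- so $\lambda$ already appears among $\lambda_1,\dots,\lambda_n$. Since the multiplicities of the eigenvalues of $A_s$ sum to $n$, the matrix has exactly $n$ dual number eigenvalues; each $\lambda_d$ is real (a compression of a Hermitian matrix is Hermitian), so the $\lambda_i$ commute with the entries of $A$ and the left/right distinction is vacuous.

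Finally, for the definiteness claim, take any appreciable $\vx \in \dgset^n$ and set $\vy = U^*\vx$, which is again appreciable since $U^*$ is dual unitary; then $\vx^*A\vx = \vy^*\Sigma\vy = \sum_{i=1}^n \lambda_i\, y_i^*y_i$, a sum of dual numbers in which each $y_i^*y_i$ is a nonnegative dual number and at least one is appreciable (because $\vy_s \ne \0$). If every $\lambda_i$ is nonnegative (resp. positive) this sum is nonnegative (resp. positive, the appreciable term dominating), so $A$ is positive semidefinite (resp. definite). Conversely, testing with $\vx = U\ve_i$ gives $\vx^*A\vx = \ve_i^*\Sigma\ve_i = \lambda_i$, so positive semidefiniteness (resp. definiteness) of $A$ forces $\lambda_i$ nonnegative (resp. positive). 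I expect the main difficulty to be the bookkeeping around repeated eigenvalues of $A_s$: verifying that $F$ really is anti-Hermitian (so that $U_1$ is genuinely dual unitary) and that these reductions are compatible with the noncommutativity of $\qset$, together with confirming that the dual parts produced really are the eigenvalues of the supplement matrices and not extraneous roots of the characteristic polynomial.
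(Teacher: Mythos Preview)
The paper does not prove this theorem in the text; it simply cites \cite{QL23} for the dual quaternion case and \cite{QC23} for the other two cases.  Your argument is therefore not competing with an in-paper proof but supplying one, and it is essentially correct: the three successive conjugations you describe do build a dual unitary $U$ with $U^*AU$ diagonal with dual-number entries, and from that decomposition the eigenvalue count and the definiteness equivalence follow as you outline.

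Two small points deserve tightening.  First, in the quaternion case you begin the uniqueness argument by assuming $\lambda$ is a dual number, but a priori a right eigenvalue is only a dual quaternion.  The missing line is immediate from your decomposition: if $A\vx=\vx\lambda$ with $\vx$ appreciable, set $\vy=U^*\vx$; then $\Sigma\vy=\vy\lambda$, some component has $y_{i,s}\ne0$, and $\lambda_iy_i=y_i\lambda$ with $\lambda_i$ a (central) dual number forces $\lambda=y_i^{-1}\lambda_iy_i=\lambda_i$.  Second, in the definiteness sum $\sum_i\lambda_i\,y_i^*y_i$ the phrase ``the appreciable term dominating'' should be unpacked: when every $\lambda_i$ with $y_{i,s}\ne0$ happens to have zero standard part, the standard part of the whole sum vanishes and positivity must be read off the dual part $\sum_i\mu_i|y_{i,s}|^2>0$.

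It is worth noting that your second conjugation---diagonalising each diagonal block of $A_d$ within an eigenspace of $A_s$---is precisely the supplement-matrix construction the paper develops later in Theorem~\ref{thm:practical}.  So your proof not only establishes the statement but also anticipates the computational content of Section~4; the argument in the cited reference \cite{QL23} proceeds along the same spectral-decomposition lines.
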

	
	Write $A = A_s + A_d\epsilon$, $\lambda = \lambda_s + \lambda_d \epsilon$ and $\vx = \vx_s + \vx_d \epsilon$.  Then
	(\ref{en2}) is equivalent to
	\begin{equation} \label{en4}
		A_s\vx_s = \lambda_s\vx_s,
	\end{equation}
	with $\vx_s \not = \0$, i.e., $\lambda_s$ is an eigenvalue of $A_s$ with an eigenvector $\vx_s$, and
	\begin{equation} \label{en5}
		(A_s-\lambda_sI)\vx_d - \lambda_d\vx_s = -A_d\vx_s.
	\end{equation}


	\bigskip
	
	\section{The Characteristic Polynomial of a Dual Hermitian Matrix}

	Suppose that $A  \in \dgset^{n \times n}$ is an $n \times n$ dual Hermitian matrix.  Here, $\dgset$ is either $\drset$, or $\dcset$, or $\dqset$.
	Then $A$ has $n$ dual number eigenvalues $\lambda_1, \dots, \lambda_n$.   We may define its determinant as
	\begin{equation}\label{def:det}
		det(A) = \lambda_1\lambda_2\dots \lambda_n.
	\end{equation}
	Furthermore, we may define the characteristic polynomial as
	$${\phi(\lambda) = det(\lambda I - A),}$$ where
	$I$ is the $n \times n$ identity matrix.
	This shows that $\phi$ is a dual number polynomial.   Note that the factorization form of a dual number polynomial is not unique.   For example, polynomial $\lambda^2 = (\lambda + a\epsilon)(\lambda-a\epsilon)$ for any real number $a$.
	This leads  the  characteristic polynomial {of the  dual Hermitian matrix} being more complicated than that of the   Hermitian matrix.
	
	\begin{Thm} \label{charact}
		Suppose that $A = A_s + A_d\epsilon \in \dgset^{n \times n}$ is an $n \times n$ dual Hermitian matrix.  Here, $\dgset$ is either $\drset$, or $\dcset$, or $\dqset$.
		Let its $n$ dual number eigenvalues be $\lambda_1, \dots, \lambda_n$.
		Then its characteristic polynomial has the form
		\begin{equation} \label{char}
			\phi(\lambda) \equiv (\lambda - \lambda_1)\dots (\lambda - \lambda_n).
		\end{equation}
		Furthermore, a dual number $\lambda = \lambda_s + \lambda_d\epsilon$ is a root of $\phi$, either if  $\lambda$ is an eigenvalue of $A$, or if $\lambda_s$ is a multiple eigenvalue of $A_s$.
	\end{Thm}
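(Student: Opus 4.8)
The plan is to reduce everything to the dual unitary diagonalization of a dual Hermitian matrix established in \cite{QL23,QC23}: there is a dual unitary matrix $U = U_s + U_d\epsilon \in \dgset^{n\times n}$ with $A = U\Lambda U^*$, where $\Lambda = \mathrm{diag}(\lambda_1,\dots,\lambda_n)$ collects the eigenvalues. Since $\det$ and $\phi$ are \emph{defined} through eigenvalues rather than through an algebraic expansion, the right move is never to manipulate the determinant formally, but always to exhibit an eigendecomposition of the matrix whose determinant is wanted. I would first record two easy consequences of the diagonalization. Taking standard parts in $U^*U = I$ shows that $U_s$ is unitary over $\gset$, and taking standard parts in $A = U\Lambda U^*$ gives $A_s = U_s\,\mathrm{diag}((\lambda_1)_s,\dots,(\lambda_n)_s)\,U_s^*$, so the eigenvalues of the Hermitian matrix $A_s$ are precisely $(\lambda_1)_s,\dots,(\lambda_n)_s$. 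Also, a dual number is self-conjugate, so for every dual number $\lambda$ the matrix $\lambda I - A$ is again dual Hermitian and $\lambda I - A = U(\lambda I - \Lambda)U^*$ is a dual unitary diagonalization of it; hence its $n$ dual number eigenvalues in the sense of (\ref{def:det}) are the diagonal entries $\lambda - \lambda_1, \dots, \lambda - \lambda_n$.

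Identity (\ref{char}) is then immediate: by (\ref{def:det}), $\phi(\lambda) = det(\lambda I - A) = \prod_{i=1}^n(\lambda - \lambda_i)$ for every dual number $\lambda$, which is exactly the claimed polynomial identity. (Expanding the product via the elementary symmetric functions of $\lambda_1,\dots,\lambda_n$ — legitimate because dual numbers commute — also reconfirms that $\phi$ is a dual number polynomial, as noted before the theorem.)

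For the last sentence I would verify each of the two stated sufficient conditions by showing that the product in (\ref{char}) acquires a vanishing factor, or a vanishing pair of factors. If $\lambda$ is an eigenvalue of $A$ with appreciable eigenvector $\vx$, set $\vy = U^*\vx$; then $\vy$ is appreciable because $U_s$ is invertible and $\vx_s \neq \0$, and $(\lambda I - \Lambda)\vy = \0$, so for any index $i$ with $y_i$ appreciable, hence invertible, we get $\lambda - \lambda_i = 0$ and therefore $\phi(\lambda) = 0$. If instead $\lambda_s$ is a multiple eigenvalue of $A_s$, then by the first paragraph there are two distinct indices $i \neq j$ with $(\lambda_i)_s = (\lambda_j)_s = \lambda_s$; consequently $\lambda - \lambda_i = (\lambda_d - (\lambda_i)_d)\epsilon$ and $\lambda - \lambda_j = (\lambda_d - (\lambda_j)_d)\epsilon$ are both infinitesimal, their product vanishes since $\epsilon^2 = 0$, and hence $\phi(\lambda) = \prod_k(\lambda-\lambda_k) = 0$ for every $\lambda_d \in \rset$.

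The proof has no deep step; the one place deserving a careful line rather than a one-liner is the passage from the diagonalization of $A$ to those of $A_s$ and of $\lambda I - A$ — i.e. checking that taking standard parts turns a dual unitary matrix into a unitary one and a dual unitary diagonalization into an ordinary one, and that a diagonal dual number matrix has exactly its diagonal entries as its eigenvalue list so that (\ref{def:det}) can be applied. If one wished to avoid invoking the diagonalization, one could instead work directly from (\ref{en4})--(\ref{en5}), but then establishing (\ref{char}) would require an extra argument to rule out spurious dual number eigenvalues of $\lambda I - A$, which is precisely the bookkeeping the diagonalization handles automatically.
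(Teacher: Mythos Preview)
Your proof is correct and follows essentially the same line as the paper's: both identify the eigenvalues of $\lambda I - A$ as $\lambda-\lambda_1,\dots,\lambda-\lambda_n$ (you via the dual unitary diagonalization, the paper directly from the eigenvalue equation) and then read off (\ref{char}) from the definition of $\det$, and both dispose of the multiple-eigenvalue case by observing that two infinitesimal factors annihilate each other. The only notable difference is that the paper's argument goes slightly beyond the stated sufficient conditions, also checking the remaining cases ($\lambda_s$ not an eigenvalue of $A_s$, or $\lambda_s$ simple) to obtain a full if-and-only-if description of the roots, whereas you verify exactly what the theorem asserts.
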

	\begin{proof}
		First, we see that $\lambda I -A$ is an $n \times n$ dual Hermitian matrix too.   Since $A$ has $n$ dual number eigenvalues $\lambda_1, \dots, \lambda_n$, by the definition of eigenvalues of dual Hermitian matrices, we see that $\lambda I -A$ has $n$ dual number eigenvalues $\lambda - \lambda_1, \dots, \lambda - \lambda_n$. By the definition of determinants and characteristic polynomials, we have (\ref{char}).
		
		Clearly, any eigenvalue of $A$ is a root of $\phi$.  By (\ref{en4}), the standard part of an eigenvalue of $A$ is an eigenvalue of $A_s$.  Let $\lambda = \lambda_s + \lambda_d\epsilon$ be a dual number.  If $\lambda_s$ is not an eigenvalue of $A_s$, then $\lambda - \lambda_i$ is an appreciable dual number for $i = 1, \cdots, n$.  By (\ref{char}), this implies $\phi(\lambda) \not = 0$, i.e., $\lambda$ is not a root of $\phi$.   If $\lambda_s$ is a single eigenvalue of $A_s$, then $\lambda_s$ is equal to the standard part of $\lambda_i$ for one $i$.  Then $\lambda - \lambda_j$ is an appreciable dual number for $j \not = i$.  By (\ref{char}), $\phi(\lambda) = 0$ if and only if $\lambda = \lambda_i$.   Finally, assume that $\lambda_s$ is a multiple eigenvalue of $A_s$.   Without loss of generality, we may assume that both the standard parts of $\lambda_1$ and $\lambda_2$ are $\lambda_s$.  Then $(\lambda - \lambda_1)(\lambda - \lambda_2) = 0$.  This implies that $\phi(\lambda) = 0$.   This completes the proof.
	\end{proof}
	
	
	Theorem \ref{charact} reveals that when $A_s$ has multiple eigenvalues,  the characteristic polynomial of $A$ may have infinitely many roots, which are not eigenvalues of $A$ in general.  Therefore, it will be difficult to find eigenvalues of $A$ {by the characteristic polynomial} if we treat the standard parts and dual parts of the eigenvalues of $A$ together.  
	Another way is to find eigenvalues and eigenvectors of $A_s$ by a classical method first.  Then, the questions are, if $\lambda_s$ is a single eigenvalue of $A_s$, can we give a formula of $\lambda_d$ such that $\lambda_s + \lambda_d\epsilon$ is an eigenvalue of $A$?  And if $\lambda_s$ is a $k$-multiple eigenvalue of $A_s$, how can we find the dual parts of corresponding {$k$ eigenvalues} of $A$?   In the next section, we will answer these two questions.

	\bigskip
	
	\section{A Practical Method for Computing Eigenvalues of a Dual Hermitian Matrix}
	
	In this section, we define {supplement matrices} for a dual Hermitian matrix $A \in \dgset^{n \times n}$ to enable the calculation of the eigenvalues of $A$.   Assume that {$\gset$ is} either $\rset$ or $\cset$ or $\qset$.
	
	\begin{Thm}\label{thm:practical}
		Suppose that $A = A_s + A_d\epsilon \in \dgset^{n \times n}$ is a dual Hermitian matrix, and {the} real number $\lambda_s $ is a $k$-multiple eigenvalue of the Hermitian matrix $A_s \in \gset^{n \times n}$.
		
		If $k = 1$, i.e., $\lambda_s$ is a single eigenvalue of $A_s$, let $\vx_s$ be a unit eigenvector of $A_s$, associated with $\lambda_s$.  Then
		$\lambda = \lambda_s + \lambda_d\epsilon$ is a single eigenvalue of $A$, where $\lambda_d = \vx_s^*A_s\vx_s$,
		with an eigenvector $\vx = \vx_s + \vx_d\epsilon$, where $\vx_d$ is a solution of
		\begin{equation} \label{en6}
			(\lambda_sI-A_s)\vx_d = (A_d -\lambda_dI)\vx_s.
		\end{equation}
		
		If $k > 1$, let $\vv_1, \dots, \vv_k$ be $k$ orthonormal eigenvectors of $A_s$, associated with $\lambda_s$.  Let $W = (\vv_1 \dots, \vv_k)$.   Then $W$ is an $n \times k$ partially unitary matrix, and $W^*A_dW \in \gset^{k \times k}$ is a Hermitian matrix.   Let $\lambda_{d1}, \dots, \lambda_{dk}$ be the $k$ eigenvalues of $W^*A_dW$, with corresponding eigenvectors $\vy_1, \dots, \vy_k$.  Then $\lambda_i = \lambda_s + \lambda_{di}{\epsilon}$ for $i = 1, \dots, k$ are eigenvalues of $A$, with eigenvectors $\vx_i = \vx_{si}+ \vx_{di}\epsilon$ for $i = 1, \dots, k$, where $\vx_{si} = W\vy_i$, and $\vx_{di}$ is a solution of
		\begin{equation} \label{en7}
			(\lambda_sI-A_s)\vx_{di} = (A_d -\lambda_{di}I)\vx_{si},
		\end{equation}
		for $i = 1, \dots, k$.
	\end{Thm}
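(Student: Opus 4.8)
The plan is to work directly with the equivalent system (\ref{en4})--(\ref{en5}). Writing $A\vx=\lambda\vx$ with $A=A_s+A_d\epsilon$, $\lambda=\lambda_s+\lambda_d\epsilon$, $\vx=\vx_s+\vx_d\epsilon$, and using that a dual number $\lambda$ with real $\lambda_s,\lambda_d$ commutes with everything in $\dgset$ (so $\lambda\vx=\vx\lambda$), the standard part is $A_s\vx_s=\lambda_s\vx_s$ and the dual part is $(\lambda_sI-A_s)\vx_d=(A_d-\lambda_dI)\vx_s$. Thus, given the eigenvalue $\lambda_s$ of $A_s$ and a choice of eigenvector data, the entire task is to choose $\lambda_d$ so that this last linear system in $\vx_d$ is solvable, and then to take any solution; the resulting $\vx=\vx_s+\vx_d\epsilon$ is appreciable because $\vx_s\neq\0$, hence a genuine eigenvector. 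The tool I would invoke is the spectral theorem for Hermitian matrices over $\gset$ --- real symmetric, complex Hermitian, or (by Zhang \cite{Zh97}) quaternion Hermitian --- which gives that $A_s$ is diagonalizable by a unitary matrix with real eigenvalues and an orthonormal eigenbasis, so that $\mathrm{range}(\lambda_sI-A_s)$ is exactly the orthogonal complement of $\ker(\lambda_sI-A_s)$, the $\lambda_s$-eigenspace. From $A^*=A$ I also record $A_s^*=A_s$ and $A_d^*=A_d$.

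For $k=1$: the Fredholm solvability condition for (\ref{en6}) is that its right-hand side $(A_d-\lambda_dI)\vx_s$ be orthogonal to the one-dimensional kernel $\mathrm{span}\{\vx_s\}$, i.e. $\vx_s^*(A_d-\lambda_dI)\vx_s=0$; since $\vx_s^*\vx_s=1$ this pins $\lambda_d$ down uniquely as $\lambda_d=\vx_s^*A_d\vx_s$, a real number because $A_d$ is Hermitian. With this value (\ref{en6}) is consistent; I pick any solution $\vx_d$, set $\vx=\vx_s+\vx_d\epsilon$ and $\lambda=\lambda_s+\lambda_d\epsilon$, and verify $A\vx=\lambda\vx$ by expanding in $\epsilon$: the standard part holds by hypothesis and the dual part is precisely (\ref{en6}). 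That $\lambda$ is a single eigenvalue of $A$ then follows from Theorem~2.1 together with (\ref{en4}): $A$ has exactly $n$ dual eigenvalues, their standard parts are eigenvalues of $A_s$, and a multiplicity count (total $n$ on each side) forces exactly one of them to equal $\lambda_s$.

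For $k>1$: here $W^*W=I_k$ and $A_sW=\lambda_sW$, and $M:=W^*A_dW$ is $k\times k$ Hermitian over $\gset$ since $M^*=W^*A_d^*W=W^*A_dW=M$; hence $M$ has $k$ real eigenvalues $\lambda_{d1},\dots,\lambda_{dk}$ with an orthonormal eigenbasis $\vy_1,\dots,\vy_k\in\gset^k$. Setting $\vx_{si}=W\vy_i$ produces a unit vector in the $\lambda_s$-eigenspace, since $\vx_{si}^*\vx_{si}=\vy_i^*W^*W\vy_i=1$ and $A_s\vx_{si}=\lambda_s\vx_{si}$. The point is that the Fredholm condition for (\ref{en7}) in $\vx_{di}$ --- orthogonality of $(A_d-\lambda_{di}I)\vx_{si}$ to $\ker(\lambda_sI-A_s)=\mathrm{range}(W)$, i.e. $W^*(A_d-\lambda_{di}I)\vx_{si}=0$ --- simplifies, using $\vx_{si}=W\vy_i$ and $W^*W=I_k$ and the centrality of the real scalar $\lambda_{di}$, to $M\vy_i=\lambda_{di}\vy_i$, which holds by the choice of $\vy_i$. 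So (\ref{en7}) is solvable; picking any $\vx_{di}$, the vector $\vx_i=\vx_{si}+\vx_{di}\epsilon$ is appreciable and, expanding $A\vx_i$ in $\epsilon$ as before, satisfies $A\vx_i=\lambda_i\vx_i$ with $\lambda_i=\lambda_s+\lambda_{di}\epsilon$.

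The crux --- and essentially the only non-mechanical step --- is the observation that the solvability (Fredholm) condition for the dual-part equation is \emph{literally} the eigen-equation of the supplement matrix $W^*A_dW$; once the inner-product and range bookkeeping for $\lambda_sI-A_s$ is in place, this is a two-line computation. The point that needs genuine care is the quaternionic linear algebra: one must work consistently with right modules and right eigenvectors, use that dual numbers with real components are central so that scalar multiplication commutes past matrices and vectors, and invoke the quaternion Hermitian spectral theorem of Zhang \cite{Zh97} (real eigenvalues, orthonormal unitary diagonalization) to justify both the range characterization of $\lambda_sI-A_s$ and the realness of the eigenvalues of $W^*A_dW$. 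Finally, to complement the construction one notes that, by the same count via Theorem~2.1 and (\ref{en4}), exactly $k$ of the $n$ dual eigenvalues of $A$ have standard part $\lambda_s$, so the $k$ eigenvalues produced above exhaust them; this is what makes the supplement matrix method complete.
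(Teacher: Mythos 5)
Your proof follows essentially the same route as the paper: reduce to the standard/dual-part system (\ref{en4})--(\ref{en5}), left-multiply by $\vx_{si}^*$ to pin down $\lambda_d=\vx_{si}^*A_d\vx_{si}=\lambda_{di}$, and read off (\ref{en6})/(\ref{en7}). The one place you go further than the paper's own argument is that you explicitly verify solvability of (\ref{en6}) and (\ref{en7}) via the Fredholm alternative --- checking that $(A_d-\lambda_{di}I)\vx_{si}$ is orthogonal to the \emph{entire} $k$-dimensional kernel of $\lambda_sI-A_s$, which after the substitution $\vx_{si}=W\vy_i$ is precisely the eigenequation $W^*A_dW\vy_i=\lambda_{di}\vy_i$; the paper only derives the necessary condition from multiplying by $\vx_{si}^*$ alone and leaves consistency of the linear system implicit. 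That solvability check, together with your multiplicity count via Theorem~2.1 to show the construction exhausts the eigenvalues with standard part $\lambda_s$, makes your write-up slightly more complete than the published proof while remaining the same method.
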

	\begin{proof}
		Suppose that $k = 1$, i.e., $\lambda_s$ is a single eigenvalue of $A_s$.   Let $\vx_s$ be a unit eigenvector of $A_s$, associated with $\lambda_s$.  Multiply (\ref{en5}) by $\vx^*_s$ from {the} left.  Then we have $\lambda_d = \vx_s^*A_d\vx_s$.  From (\ref{en5}), we have (\ref{en6}).
		
		Suppose that $k > 1$.  Let $\vv_1, \dots, \vv_k$ be $k$ orthonormal eigenvectors of $A_s$, associated with $\lambda_s$.  Let $W = (\vv_1 \dots, \vv_k)$.   Then $W$ is an $n \times k$ partially unitary matrix, and $W^*A_dW \in \gset^{k \times k}$ is a Hermitian matrix.   Let $\lambda_{d1}, \dots, \lambda_{dk}$ be the $k$ eigenvalues of $W^*A_dW$, with corresponding eigenvectors $\vy_1, \dots, \vy_k$.  Then we have $W^*A_dW\vy_i = \lambda_{di}\vy_i$, for $i = 1,\dots, k$.  Let $\vx_{si} = W\vy_i$, for $i = 1, \dots, k$.
		Then we have $A_s\vx_{si} = \lambda_s\vx_{si}$, i.e., $\vx_{si}$ is an eigenvector of $A_s$, associated with eigenvalue $\lambda_s$, for $i = 1, \dots, k$.   Now (\ref{en5}) has the form
		$$(A_s-\lambda_sI)\vx_{di} - \lambda_d\vx_{si} = -A_d{\vx_{si}},$$
		for $i = 1, \dots, k$.   Multiply the above equality by $\vx_{si}^*$ from {the} left.  Then {we have $\lambda_{d}=\vx_{si}^*A_d\vx_{si}=\vy_i^*W^*A_dW\vy_i =\lambda_{di}$, and} $\lambda_i = \lambda_s + \lambda_{di}{\epsilon}$ for $i = 1, \dots, k$ are eigenvalues of $A$, with eigenvectors $\vx_i = \vx_{si}+ \vx_{di}\epsilon$, where $\vx_{di}$ satisfies (\ref{en7}),
		for $i = 1, \dots, k$.
		
		This completes the proof.
	\end{proof}
	
	We call the Hermitian matrix $W^*A_dW \in \gset^{k \times k}$, the {\bf supplement matrix} of the dual Hermitian matrix $A$, corresponding to the $k$-multiple eigenvalue $\lambda_s$ of $A_s$, the standard part of $A$.
	
	Then we have a practical method for computing eigenvalues of the dual Hermitian matrix $A$.
	
	\medskip
	
	\begin{Alg} {\bf The Supplement Matrix Method {(SMM)}} \label{alg:SMM}
		Suppose that $A = A_s + A_d\epsilon \in \dgset^{n \times n}$ is a dual Hermitian matrix.
		
		Step 1.  Use a practical method to find $n$ real eigenvalues of the Hermitian matrix $A_s \in \gset^{n \times n}$, with a set of orthonormal eigenvectors.
		
		Step 2.  Assume that $\lambda_s$ is a single eigenvalue of $A_s$ with a unit vector $\vx_s$.  Then
		$\lambda = \lambda_s + \lambda_d\epsilon$ is a single eigenvalue of $A$, where $\lambda_d ={ \vx_s^*A_d\vx_s}$,
		with an eigenvector $\vx = \vx_s + \vx_d\epsilon$, where $\vx_d$ is a solution of (\ref{en6}).
		
		Step 3. Assume that $\lambda_s$ is a $k$-multiple eigenvalue of $A_s$ with $k$ orthonormal eigenvectors
		${\vv_1}, \dots, \vv_k$, for $k > 1$.   Let $W = (\vv_1 \dots, \vv_k)$ and $W^*A_dW \in \gset^{k \times k}$.   Use a practical method to find $k$ eigenvalues of $W^*A_dW$, as $\lambda_{d1}, \dots, \lambda_{dk}$, with corresponding eigenvectors $\vy_1, \dots, \vy_k$.  Then $\lambda_i = \lambda_s + \lambda_{di}{\epsilon}$ for $i = 1, \dots, k$ are eigenvalues of $A$, with eigenvectors $\vx_i = \vx_{si}+ \vx_{di}\epsilon$ for $i = 1, \dots, k$, where $\vx_{si} = W\vy_i$, and $\vx_{di}$ is a solution of (\ref{en7}) for $i = 1, \dots, k$.
		
		Step 4.  Apply Step 2 to all single eigenvalues of $A_s$, and Step 3 to all multiple eigenvalues of $A_s$.
	\end{Alg}

	The linear systems  (\ref{en6}) and (\ref{en7}) may be computed by several methods.
	Suppose the full eigenvalue decomposition of $A_s$ is known as $A_s=U_s\Sigma_sU_s^*$. Then we have
	\begin{eqnarray*}
		\vx_{di} &=&(\lambda_sI-A_s)^+ (A_d -\lambda_dI)\vx_{si} \\
		&=&U_s(\Sigma_s-\lambda_sI)^+U_s^*(A_d -\lambda_dI)\vx_{si}\\
		&=&U_s(\Sigma_s-\lambda_sI)^+U_s^*A_d\vx_{si},
	\end{eqnarray*}
	for $i=1,\dots,k$.
	Otherwise, if the full eigenvalue decomposition of $A_s$ is  {not} known in advance, then we can solve the linear systems  (\ref{en6}) and (\ref{en7})  directly.
	The both systems are consistent and solvable.
	It should be noted that a quaternion linear system may not be easy to solve since the quaternion numbers {are} not communicative.
	One {practical} way is to reformulate the quaternion linear system as a real linear system.
	{Furthermore, both two systems are ill-conditioned since the the coefficient martices are singular, which may lead the numerical  algorithms being unstable. One  possible way is to add extra conditions  $\vx_i^*\vx_j=\delta_{ij}$ for $i,j=1,\dots,k$.}

	Similarly, Algorithm \ref{alg:SMM} can be extended to compute several extreme eigenpairs   or  a few eigenpairs of a dual Hermitian matrix.
	One difficulty here is that if we do not know the multiplicity of the eigenvalue in advance,  we may not obtain the eigenvectors exactly. One way to solve this is to compute {several} extra eigenvalues in the standard part.

	\bigskip
	
	\section{The Relative Configuration Problem in Formation Control}

	Consider the formation control problem of $n$ rigid bodies.   These $n$ rigid bodies can be autonomous mobile robots, or unmanned aerial vehicles (UAVs), or autonomous underwater vehicles (AUVs), or small satellites.
	Then these $n$ rigid bodies can be described by a graph $G=(V,E)$ with $n$ vertices and $m$ edges.  For two rigid bodies $i$ and $j$ in $V$, if rigid body $i$ can sense rigid body $j$, then edge $(i, j) \in E$.    As studied in \cite{QWL23}, we assume that any pair of these rigid bodies are mutual visual, i.e., rigid body $i$ can sense rigid body $j$ if and only if rigid body $j$ can sense rigid body $i$.   Furthermore, we assume that $G$ is connected in the sense that for any node pair $i$ and $j$, either $(i, j) \in E$, or there is a path connecting $i$ and $j$ in $G$, i.e., there are nodes $i_i, \dots, i_k \in V$ such that $(i, i_1), \dots, (i_k, j) \in E$.
	
	Suppose that for each $(i, j) \in E$, we have a desired relative configuration  {from rigid body $i$ to $j$ as} $q_{d_{ij}} \in \hat{\mathbb U}$.
	We say that the desired relative configurations $\left\{ q_{d_{ij}} : (i, j) \in E \right\}$ is {\bf reasonable} if and only if there is a desired formation $\vdq q_d\in\hat{\mathbb U}^{n\times 1}$, which satisfies
	\begin{equation} \label{desrel}
		q_{d_{ij}} = q_{d_i}^*q_{d_j},
	\end{equation}
	for all $(i, j) \in E$.
	
	Thus, a meaningful application problem in formation control is to verify  {whether} a given desired relative configuration scheme is reasonable or not.   See \cite{LWHF14, QWL23, WYL12} for more study on formation control.
	
	\begin{Thm}  \label{Reasonable} Suppose that we have an undirected graph $G = (V, E)$, which is bidirectional and connected.
		The desired relative configurations $\left\{ q_{d_{ij}} : (i, j) \in E \right\}$ is {\bf reasonable} if and only if
		\begin{itemize}
			
			\item for all $(i, j) \in E$, we have
			\begin{equation} \label{Hermitian}
				q_{d_{ji}} = q_{d_{ij}}^*;
			\end{equation}
			
			\item for any cycle $\left\{ j_1, \dots, j_k \right\}$, with $j_{k+1} = j_1$, of $G$, we have
			\begin{equation} \label{cycle}
				\prod_{i=1}^k q_{d_{j_ij_{i+1}}} = 1.
			\end{equation}
		\end{itemize}
		
		Furthermore, let {${\vdq q}_0$} be a desired formation satisfying (\ref{desrel}).  Then the set of all the desired formations satisfying (\ref{desrel}) is
		\begin{equation} \label{alldesiredformation}
			\left\{ c \vdq q_0 : c \in \hat{\mathbb U} \right\}.
		\end{equation}
	\end{Thm}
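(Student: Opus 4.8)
The plan is to split the argument into three parts: necessity of (\ref{Hermitian}) and (\ref{cycle}), their sufficiency via a spanning-tree construction, and the description (\ref{alldesiredformation}) of the solution set. Throughout I will lean on the unit-dual-quaternion identities recalled in Section 2: $(ab)^* = b^*a^*$, $(a^*)^* = a$, and $qq^* = q^*q = 1$ for $q \in \hat{\mathbb U}$ (so $q^{-1} = q^*$), together with the fact that a product of elements of $\hat{\mathbb U}$ again lies in $\hat{\mathbb U}$.

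For necessity, assume a formation $\vdq q_d$ satisfies (\ref{desrel}). Then (\ref{Hermitian}) is immediate from $q_{d_{ji}} = q_{d_j}^*q_{d_i} = (q_{d_i}^*q_{d_j})^* = q_{d_{ij}}^*$, and (\ref{cycle}) follows by telescoping: along a cycle $j_1,\dots,j_k,j_{k+1}=j_1$ one has $\prod_{i=1}^k q_{d_{j_ij_{i+1}}} = \prod_{i=1}^k q_{d_{j_i}}^*q_{d_{j_{i+1}}}$, where adjacent factors meet in $q_{d_{j_{i+1}}}q_{d_{j_{i+1}}}^* = 1$, leaving $q_{d_{j_1}}^*q_{d_{j_1}} = 1$.

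For sufficiency I would build the formation from a spanning tree. Pick a spanning tree $T$ of $G$ (which exists since $G$ is connected), root it at vertex $1$, set $q_{d_1} = 1$, and define $q_{d_j} := q_{d_i}q_{d_{ij}}$ recursively whenever $j$ is the child of $i$ in $T$; each $q_{d_j}$ is then a product of elements of $\hat{\mathbb U}$, hence in $\hat{\mathbb U}$. The key lemma, to be proved by induction on tree-path length, is that $q_{d_i}^*q_{d_j}$ equals the ordered product $P_{ij}$ of the edge data $q_{d_{ab}}$ along the $T$-path from $i$ to $j$; the one-step case $q_{d_i}^*q_{d_{i'}} = q_{d_{ii'}}$ for a tree edge uses the recursive definition when $i'$ is a child of $i$ and (\ref{Hermitian}) when $i$ is a child of $i'$. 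Granting the lemma, (\ref{desrel}) holds on tree edges by construction (and on their reverses by (\ref{Hermitian})), while on a non-tree edge $(i,j)$ it follows from (\ref{cycle}) applied to the fundamental cycle of $(i,j)$: this reads $1 = q_{d_{ij}}P_{ji} = q_{d_{ij}}q_{d_j}^*q_{d_i}$, and cancelling the unit factors yields $q_{d_{ij}} = q_{d_i}^*q_{d_j}$.

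Finally, for the solution set, suppose $\vdq q_0$ and $\vdq q$ both satisfy (\ref{desrel}). Then $q_i^*q_j = q_{0,i}^*q_{0,j}$ on every edge, which rearranges to $q_jq_{0,j}^* = q_iq_{0,i}^*$; connectedness forces $c := q_iq_{0,i}^*$ to be independent of $i$, and then $c \in \hat{\mathbb U}$ with $q_i = cq_{0,i}$, i.e.\ $\vdq q = c\vdq q_0$. The reverse inclusion is a one-line check, since $(cq_{0,i})^*(cq_{0,j}) = q_{0,i}^*c^*cq_{0,j} = q_{0,i}^*q_{0,j} = q_{d_{ij}}$ and each $cq_{0,i} \in \hat{\mathbb U}$. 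I expect the only real obstacle to be the bookkeeping inside the sufficiency lemma — tracking edge orientations and keeping the telescoping products aligned while invoking only $qq^* = q^*q = 1$ — rather than anything conceptually subtle; the necessity and solution-set parts are routine manipulations.
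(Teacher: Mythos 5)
Your proposal is correct, and the sufficiency argument takes a genuinely different route than the paper's. The paper proves sufficiency by induction on the excess $M = m_0 - n + 1$ (the number of independent cycles): the base case $M=0$ is a tree, which it handles by propagating $q_{d_i}$ down from the root, and the inductive step deletes one bidirectional edge pair from a cycle, applies the hypothesis to the smaller graph, and then checks (\ref{desrel}) on the deleted edge using (\ref{Hermitian}) and (\ref{cycle}). You instead fix a single spanning tree $T$, build the formation on $T$ exactly as in the paper's base case, and then dispatch \emph{all} non-tree edges at once by applying (\ref{cycle}) to each fundamental cycle, supported by the path-product lemma $q_{d_i}^*q_{d_j} = P_{ij}$ (which, as you sketch, follows by telescoping $q_{d_{v_k}}q_{d_{v_k}}^*=1$ along the $T$-path rather than by any deep induction). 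This is the standard ``potential function on a group-labeled graph'' argument, and it is arguably cleaner because it avoids re-running the whole construction at each inductive step and makes clear that only fundamental cycles need to be checked. Your treatment of the solution-set claim is also more explicit than the paper's: the paper just says ``we may prove the other side by induction as above,'' whereas you observe directly that $q_iq_{0,i}^*$ is edge-locally constant, hence globally constant by connectedness, which both proves the inclusion and exhibits the multiplier $c$; the reverse inclusion is the one-line computation $(cq_{0,i})^*(cq_{0,j}) = q_{0,i}^*q_{0,j}$. Both approaches are sound; yours buys a shorter, more structural argument, while the paper's induction on $M$ is perhaps more self-contained for readers unfamiliar with the spanning-tree/fundamental-cycle pattern.
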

	\begin{proof}   Suppose that the desired relative configurations $\left\{ q_{d_{ij}} : (i, j) \in E \right\}$ is reasonable.   Then there is a desired formation $\vdq q_d\in\hat{\mathbb U}^{n\times 1}$, which satisfies (\ref{desrel}) for all $(i, j) \in E$.  Then for all $(i, j) \in E$, we have $q_{d_{ji}} = q_{d_j}^*q_{d_i} = q_{d_{ij}}^*$, which implies (\ref{Hermitian}).  On the cycle $\left\{ j_1, \dots, j_k \right\}$, we have $q_{d_{j_{i+1}}} = q_{d_{j_i}}q_{d_{{j_i}{j_{i+1}}}}$
		for $i = 1, \dots, k$.  This implies (\ref{cycle}).
		
		On the other hand, suppose that the desired relative configurations {$\left\{ q_{d_{ij}} \right\}$} satisfy (\ref{Hermitian}) for all $(i, j) \in E$, and (\ref{cycle}) for all cycles in $G$.   We wish to show that there is a desired formation $\vdq q_d\in\hat{\mathbb U}^{n\times 1}$, which satisfies
		(\ref{desrel}) for all $(i, j) \in E$.  {We count two edges $(i, j)$ and $(j , i)$ as one bidirectional edge pair.   Thus, $G$ has $m_0 = {m \over 2}$ bidirectional edge pairs.   By graph theory, since $G$ is bidirectional and connected, we have $m_0 \ge n - 1$.   Let $M = m_0 - n +1$.  We now show by induction on $M$, that there is a desired formation $\vdq q_d\in\hat{\mathbb U}^{n\times 1}$, which satisfies (\ref{desrel}) for all $(i, j) \in E$.} 		
		
		(i)  {We first show this for $M = 0, i.e., m_0 = n-1$.}   Then $G$ is a bidirectional tree.
		{Without loss of generality, we pick node $1$ as the root of this tree.}
		Let $q_{d_1} = 1$.
		Each node in $\{2,\dots,n\}$ has one unique father node.
		Denote $V_{(1)}$ as the set of children nodes of the root.
		{Let} $q_{d_i} = q_{d_1}q_{d_{1i}}=q_{d_{1i}}$ for all $i\in V_{(1)}$.
		Similarly, denote $V_{(j)}$ as the set of children nodes of  $V_{{(j-1)}}$.
		Then $q_{d_i} = q_{d_{f_i}}q_{d_{f_ii}}$ for all $i\in V_{(j)}$,  {where $f_i$ is the} father node  {of node $i$}.  		
		We repeat this process until $V=\cup_j V_{(j)}$.  By \eqref{Hermitian},  equation \eqref{desrel} is satisfied {for all $(i, j) \in E$}.

		(ii) We now assume that this claim is true for $M = M_0$, and prove that it is true for $M = M_0 +1$.   {Then $M > m_0 - n + 1$.  This implies that} there is at least one cycle  $\left\{ j_1, \dots, j_k \right\}$, with $j_{k+1} = j_1$, of $G$, such that the $k$ nodes $j_1, \dots, j_k$ are all distinct and $k \ge 3$. Delete the bidirectional edge pair $(j_1, j_k)$ and $(j_k, j_1)$.   We get an  {undirected} graph $G_0 = (V, E_0)$.  Then $G_0$ is still bidirectional and {connected}, and the desired relative configurations $\left\{ q_{d_{ij}} : (i, j) \in E_0 \right\}$ satisfy (\ref{Hermitian}) for all $(i, j) \in E_0$, and (\ref{cycle}) for all cycles in $G_0$.  By our induction assumption, there is a desired formation $\vdq q_d\in\hat{\mathbb U}^{n\times 1}$ such that (\ref{desrel}) holds
		for all $(i, j) \in E_0$.  Now,  by (\ref{Hermitian}) for $i = j_1$ and $j = j_k$, and (\ref{cycle}) for this cycle $\left\{ j_1, \dots, j_k \right\}$,  (\ref{desrel}) also holds for $(j_1, j_k)$ and $(j_k, j_1)$.  This proves this claim for $M = M_0+1$ in this case.

		Suppose that  ${\vdq q}_0$ is a desired formation satisfying (\ref{desrel}), and $c \in \hat{\mathbb U}$.  Then $c {\vdq q}_0$ also satisfies (\ref{desrel}).   We may prove the other side of the last claim by induction as above.
		
		The proof is {completed}.
	\end{proof}
	The first condition in \eqref{Hermitian} is  easy to verify. However, the second condition in \eqref{cycle} is relatively complicated since the number of cycles may increase exponentially with the number of nodes.
	Fortunately, we can solve this problem with the help of eigenvalues of dual Hermitian matrices, as shown in the next section.
	
	{\section{A Cross-Disciplinary Approach for the Relative Configuration Problem}}

	{{In this section, we show that}  condition \eqref{cycle} is equivalent to  the {balance} of the corresponding gain graph $\Phi$  \cite{CLQW24}.
		Here, $\Phi=(G,\dgset, \varphi)$ is a   dual  unit gain graph with $n$ vertices,     $G=(V,E)$ is the underlying graph,    {$n=|V|$, $m=|E|$,} $\dgset$ is the  gain group, and $\varphi: {E}(\Phi)\rightarrow {\dgset}$ is the gain function such that $\varphi(e_{ij}) = \varphi^{-1}(e_{ji})$.
		{For instance, if $\dgset$ is the set of unit dual complex numbers, then $\Phi$ is a  dual complex unit gain graph.
			If $\dgset$ is the set of unit dual quaternion numbers, then $\Phi$ is a  dual quaternion unit gain graph.}
		The adjacency and Laplacian matrices of $\Phi$ {are} defined
		via the gain function  {$\varphi(e)$} as follows \cite{CLQW24},
		\begin{equation}\label{mat:Adj_phi}
			a_{ij}{(\Phi)} = \left\{
			\begin{array}{cl}
				\varphi(e_{ij}),  &  \text{if } e_{ij}\in {E(\Phi)},\\
				0, & \text{otherwise},
			\end{array}
			\right.  \ \text{ and } L(\Phi) =   D- A(\Phi).
		\end{equation}
		Here,  {$\varphi(e_{ij}) \in {\dgset}$, $\varphi(e_{ij})=\varphi(e_{ji})^{-1}$,}  and $D$ the degree matrix of the corresponding  underlying graph 		$G$.
		{If $|\varphi(e_{ij})|=1$ for all  $e_{ij}\in {E(\Phi)}$, then}
		$A(\Phi)$ and {$L(\Phi)$} are Hermitian matrices in $\dgset^{n\times n}$.

		We define $\varphi(e_{ij}) = q_{d_{ij}}$ for the formation control problem here.
		Then we may apply spectral graph {theory.}
		Let $A$ and $L$ be the adjacency and Laplacian matrices of $\Phi$ defined by \eqref{mat:Adj_phi}, respectively.
		Recently, \cite{CLQW24} showed that  if $\Phi$ is balanced, then $A$ and $L$  are similar with the adjacency and Laplacian matrices of the underlying graph $G$, respectively, and
		its spectrum
		$\sigma_A(\Phi)$ consists of $n$ real numbers, while its Laplacian spectrum $\sigma_L(\Phi)$ consists of one zero and $n-1$ positive numbers.
		Furthermore,  there is $\sigma_A(\Phi)=\sigma_A(G)$ and $\sigma_L(\Phi)=\sigma_L(G)$.
		
		Based on the above facts,  we may verify the reasonableness of a desired relative configuration  by computing all eigenvalues of the adjacency or Laplacian matrices of $\Phi$. If  $\sigma_A(\Phi)=\sigma_A(G)$ or $\sigma_L(\Phi)=\sigma_L(G)$, then the gain graph is balanced and the desired relative configuration is reasonable. Otherwise, the desired relative configuration   is not reasonable. The method is applicable for low and medium dimensional problems.
		
		In the following, we propose verifying the reasonableness of the desired relative configuration by computing the smallest eigenvalue of the Laplacian matrix.
		\begin{Thm}\label{thm:balanced}
			Let $\Phi=(G,\dgset, \varphi)$ be a gain graph with $n=|V|$, $m=|E|$. Suppose that $\varphi(e_{ij})=1$ for all $(ij)\in E$ and  $G$ has $t$ subgraphs $G_i=(V_i,E_i)$, $n=n_1+\dots+n_t$, $V_i=\{n_1+\dots+n_{i-1}+1, \dots,n_1+\dots+n_i\}$, and {$n_0=0$.} Let   $L$ be the Laplacian {matrix} of $\Phi$ defined by \eqref{mat:Adj_phi}.  Then $\Phi$ is balanced if and only if  the following conditions hold simultaneously:
			
			(i) $L$  has $t$ zero eigenvalues that are the smallest eigenvalue of $L$;
			
			(ii)  their  eigenvectors $\vx_i\in\dgset^n$ satisfies {$|x_i(j)|=\frac{\sqrt{n_i}}{n_i}$} for $j=n_1+\dots+n_{i-1}+1, \dots,n_1+\dots+n_i$,   $|x_i(j)|=0$ otherwise;
			
			(iii) $Y^*LY$ is equal to the Laplacian matrix of the underlying graph $G$, where $Y=\text{diag}(\vy)\in\dgset^{n\times n}$, $\vx_a=\vx_1+\dots+\vx_t$, and $y(i) = \frac{x_a(i)}{|x_a(i)|}$.
		\end{Thm}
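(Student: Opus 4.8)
The plan is to prove both implications by passing through the switching (coboundary) description of balance and then reading conditions (i)--(iii) off it. By Theorem~\ref{Reasonable}, applied to each connected component $G_i$ separately (each $G_i$ is bidirectional and connected, and the symmetry \eqref{Hermitian} holds automatically since $\varphi(e_{ij})$ is a unit dual quaternion), $\Phi$ is balanced if and only if its gain function is a coboundary: there is $\vg=(g_1,\dots,g_n)^\T\in\hat{\mathbb U}^n$ with $\varphi(e_{ij})=g_i^*g_j$ for all $(ij)\in E$. Setting $P=\mathrm{diag}(g_1,\dots,g_n)$, which is a dual unitary matrix since each $g_j$ is a unit dual quaternion, and letting $D$ be the degree matrix of the underlying graph (as in \eqref{mat:Adj_phi}), this is equivalent to
\begin{equation*}
	A(\Phi)=P^*A(G)P,\qquad\text{hence}\qquad L=P^*L(G)P,
\end{equation*}
where $A(G)$ and $L(G)$ are the real adjacency and Laplacian matrices of $G=G_1\sqcup\dots\sqcup G_t$ and we used $P^*DP=D$. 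Thus the whole statement reduces to showing that the unitary--diagonal congruence $L=P^*L(G)P$ is present exactly when (i)--(iii) hold. (When $\varphi(e_{ij})=1$ on every edge one may simply take $P=I$, so that $L=L(G)$.)

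\smallskip
\noindent\emph{Balance $\Rightarrow$ (i)--(iii).} Fix $P$ as above. Since $P$ is dual unitary, $(\mu,\vu)\mapsto(\mu,P^*\vu)$ carries eigenpairs of $L(G)$ to eigenpairs of $L$, so $L$ and $L(G)$ have the same $n$ dual-number eigenvalues; as each $G_i$ is connected these are $0$ with multiplicity $t$ together with $n-t$ positive reals, which is (i). Because $L(G)\mathbf{1}_{V_i}=\0$, the vectors $\vx_i:=\tfrac{1}{\sqrt{n_i}}P^*\mathbf{1}_{V_i}$ ($\mathbf{1}_{V_i}$ the indicator vector of $V_i$) are orthonormal zero-eigenvectors of $L$ with $x_i(j)=g_j^*/\sqrt{n_i}$ for $j\in V_i$ and $x_i(j)=0$ otherwise, so $|x_i(j)|=\sqrt{n_i}/n_i$ on $V_i$ and $0$ elsewhere, which is (ii). Finally $\vx_a=\vx_1+\dots+\vx_t$ has $x_a(j)=g_j^*/\sqrt{n_i}$ for the unique $i$ with $j\in V_i$, so $|x_a(j)|=1/\sqrt{n_i}$ is appreciable, $y(j)=x_a(j)|x_a(j)|^{-1}=g_j^*$, hence $Y=\mathrm{diag}(\vy)=P^*$ and $Y^*LY=P(P^*L(G)P)P^*=L(G)$, which is (iii).

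\smallskip
\noindent\emph{(i)--(iii) $\Rightarrow$ balance.} By (ii) each vertex $j$ lies in a unique block $V_i$ and $x_a(j)=x_i(j)$ has $|x_a(j)|=1/\sqrt{n_i}>0$; hence $x_a(j)$ is appreciable, $y(j)=x_a(j)|x_a(j)|^{-1}$ is a well-defined unit dual quaternion, and $Y=\mathrm{diag}(\vy)$ is dual unitary. From (iii), $L=YL(G)Y^*$; subtracting from the common degree matrix $D$ and using $YDY^*=D$ gives $A(\Phi)=YA(G)Y^*$, i.e.\ $\varphi(e_{ij})=y(i)y(j)^*$ for every $(ij)\in E$. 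For any cycle $\{j_1,\dots,j_k\}$ with $j_{k+1}=j_1$,
\begin{equation*}
	\prod_{l=1}^{k}\varphi(e_{j_lj_{l+1}})=\prod_{l=1}^{k}y(j_l)y(j_{l+1})^*=y(j_1)\Big(\prod_{l=2}^{k}y(j_l)^*y(j_l)\Big)y(j_1)^*=y(j_1)y(j_1)^*=1,
\end{equation*}
since $y(j_l)^*y(j_l)=|y(j_l)|^2=1$. So condition \eqref{cycle} holds for every cycle of $G$, whence $\Phi$ is balanced; moreover $q_{d_i}=y(i)^*$ is then a compatible desired formation. (Only (ii) and (iii) are used here; (i) is part of the diagnostic run in practice, and is in fact implied by the other two, since $L=YL(G)Y^*$ forces the spectrum of $L$ to equal that of $L(G)$.)

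\smallskip
\noindent\emph{Where the real work is.} The skeleton above is routine once the coboundary description is available; the remaining care is all in the dual arithmetic. First, one must check that the congruence $L=P^*L(G)P$ transfers the full list of $n$ \emph{dual-number} eigenvalues of $L(G)$, not merely their standard parts; this uses the eigenvalue theory of Section~2, or, equivalently, the supplement-matrix analysis of Section~4 at the $t$-fold standard eigenvalue $0$ of $L_s$, where the supplement matrix $W^*L_dW$ vanishes precisely when $\Phi$ is balanced. Second, the normalization $y(j)=x_a(j)|x_a(j)|^{-1}$ is legitimate only because (ii) forces $|x_a(j)|=1/\sqrt{n_i}$ to be an appreciable, hence invertible and central, dual number. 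Third, the telescoping of the cycle gain genuinely relies on $y(j)$ being a \emph{unit} dual quaternion, so that $y(j)^*y(j)=1$ in the noncommutative setting; without the constant-modulus content of (ii) the product would not collapse. I expect the first point --- keeping track of the dual parts of the zero eigenvalues, rather than just the real Laplacian spectrum --- to be the part that needs the most careful write-up.
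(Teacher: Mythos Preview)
Your proposal is correct and follows essentially the same route as the paper: both rest on the switching (coboundary) characterization of balance --- $\Phi$ is balanced iff there is a diagonal dual-unitary $Y$ with $Y^*LY=L(G)$ --- together with the classical Laplacian spectral theory of the underlying graph. The paper's own proof is a single sentence invoking exactly these two facts, whereas you have spelled out both implications in full, including the explicit construction $\vx_i=\tfrac{1}{\sqrt{n_i}}P^*\mathbf{1}_{V_i}$, the verification that $Y=P^*$, and the telescoping of the cycle gain; your observation that only (ii) and (iii) are needed for the reverse implication, with (i) a consequence, is a nice sharpening not noted in the paper.
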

		\begin{proof}
			The results {follow} directly from the fact that $\Phi$ is balanced if and only if  there exists a diagonal matrix $Y$ such that $Y^*LY$ is equal to the Laplacian matrix of the underlying graph $G$ and the spectral theory of the  Laplacian matrix of a graph $G$.
	\end{proof}}
	
	The gain graph theory is a well-developed area in spectral graph theory.   A gain graph assigns an element of a mathematical group to each of its edges, and if a group element is assigned to an edge, then the inverse of that group element is always assigned to the inverse edge of that edge.    {If such a mathematical group} consists of unit numbers of a number system,  {then} the gain graph is called a unit gain graph.   The real unit gain graph is called a signed graph \cite{HLP03}.  There are {studies} on complex unit gain graphs and quaternion unit gain graphs \cite{CLQW24}.  What we used here for the formation control problem are dual quaternion unit gain graphs, which are not in the literature yet.  By exploring the eigenvalue problem of dual Hermitian matrices, and its link with the unit gain graph theory, we opened a cross-disciplinary approach to solve the relative configuration problem {in formation} control.

	\bigskip
	
	\section{Numerical Experiments}
	
	%
	%
	
	We begin with a toy example of computing all eigenpairs of the adjacency matrix of a dual complex unit gain graph.
	
	\begin{example}
		Consider  a dual complex unit gain  cycle  {$\Phi_A$}  in  Figure 1~(a).
		The adjacency   matrix is given as follows.
		\[A = {\begin{bmatrix}
				0 & 1+i\epsilon & 1-2i\epsilon \\
				1-i\epsilon & 0 &  1 -i\epsilon  \\
				1+2i\epsilon &  1 +i\epsilon  & 0
			\end{bmatrix}.}	\]
		In this example, the eigenvalues of $A_s$ are {$\lambda_{s1}=2$, $\lambda_{s2}=\lambda_{s3}=-1$,}
		and their corresponding  eigenvectors are
		\[  {\vv_1}= \begin{bmatrix}			0.5774 \\   0.5774 \\  0.5774\\
		\end{bmatrix},\quad
		{\vv_2}= \begin{bmatrix}
			-0.7152  \\  0.0166 \\  0.6987\\
		\end{bmatrix},\quad
		{\vv_3}= \begin{bmatrix}
			0.3938 \\  -0.8163   \\ 0.4225\\
		\end{bmatrix}.\]
		The first eigenvalue is single. Thus $\vx_{s1}={\vv_1}$, and
		\[\lambda_{d1} =\vx_{s1}^*{A_d}\vx_{s1}=0,\quad \vx_{d1}= \begin{bmatrix}
			0.1925i  \\ 0.3849i  \\ - 0.5774i \\
		\end{bmatrix}.\]

		The standard parts of  the  second and the third eigenvalues are the same.
		Let $W=[{\vv_2, \vv_3}]$. Then the {supplement} matrix is
		\[ W^*A_dW=\begin{bmatrix}
			0 & 1.1547i \\
			- 1.1547i & 0 \\
		\end{bmatrix},\]
		and its eigenparis are
		\[\lambda_{d2}= 1.1547, \quad \lambda_{d3}=-1.1547,\quad \vy_1=\begin{bmatrix}
			0.7071 \\  - 0.7071i
		\end{bmatrix},\quad \vy_2=\begin{bmatrix}
			0.7071 \\   0.7071i
		\end{bmatrix},\]
		respectively.
		Furthermore, there is
		\[\vx_{s2}=W\vy_1=\begin{bmatrix}
			-0.5058 - 0.2785i  \\ 0.0117 + 0.5772i  \\ 0.4940 - 0.2988i\\
		\end{bmatrix},\quad
		\vx_{s3}=W\vy_2=\begin{bmatrix}
			-0.5058 + 0.2785i  \\ 0.0117 - 0.5772i \\  0.4940 + 0.2988i\\
		\end{bmatrix}. \]
		At last, {we solve \eqref{en7} and derive that}
		\[\vx_{d2}=\begin{bmatrix}
			0.1969 - 0.2183i \\  0.1969 - 0.2183i \\  0.1969 - 0.2183i\\
		\end{bmatrix},\quad
		{\vx_{d3}}=\begin{bmatrix}
			-0.1969 - 0.2183i \\ -0.1969 - 0.2183i \\ -0.1969 - 0.2183i\\
		\end{bmatrix}. \]
	\end{example}

	\begin{figure}[t]\label{ex:dcugg}
		\begin{subfigure}[b]{0.45\textwidth}
			\centering
			\includegraphics[width=0.65\linewidth]{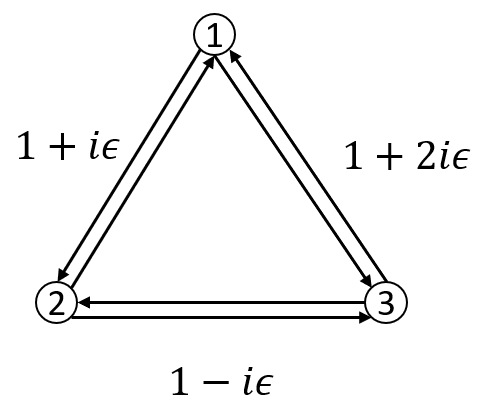}
			\caption{Dual complex unit gain graph $\Phi_A$}
			\label{fig:balanced}
		\end{subfigure}
		\hfill
		\begin{subfigure}[b]{0.45\textwidth}
			\centering
			\includegraphics[width=0.65\linewidth]{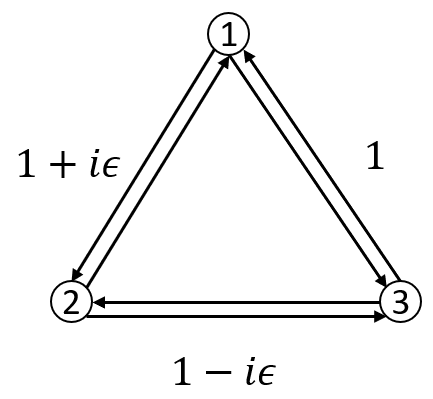}
			\caption{Dual complex unit gain graph $\Phi_B$}
			\label{fig:balanced1}
		\end{subfigure}
		
		\caption{Two dual complex unit gain graphs}
	\end{figure}
	
	Following the same approach, we derive the eigenpairs of the  adjacency matrix of {the} dual complex unit gain  cycle in  {Figure 1~(b).}
	\begin{example}
		Consider  a dual complex unit gain  cycle  in  Figure 1~(b).
		The adjacency   matrix is  given as follows.
		\[B = {\begin{bmatrix}
				0 & 1+i\epsilon & 1 \\
				1-i\epsilon & 0 &  1 -i\epsilon  \\
				1 &  1 +i\epsilon  & 0
			\end{bmatrix}.}	\]
		Its eigenvalues are
		\[\lambda_1=2,\quad \lambda_2=-1,\quad \lambda_3 = -1,\]
		and their corresponding  eigenvectors are
		\[\vx_1=\begin{bmatrix}
			0.5774  + 0.1925i\epsilon \\
			0.5774  - 0.3849i\epsilon \\
			0.5774  + 0.1925i\epsilon \\
		\end{bmatrix},\
		\vx_2=\begin{bmatrix}
			-0.7152 - 0.0055i\epsilon \\
			0.0166  - 0.0055i\epsilon \\
			0.6987  - 0.0055i\epsilon \\
		\end{bmatrix},\]
		and
		\[
		\vx_3=\begin{bmatrix}
			0.6834 + 0.2429i\epsilon \\
			0.7287 + 0.2429i\epsilon \\
			0.0453  + 0.2429i\epsilon \\
		\end{bmatrix},
		\]
		respectively.
		It should be noted that all eigenvalues are real numbers. This is because $\Phi_B$ is balanced {\cite{CLQW24}}.
	\end{example}
	
	\bigskip
	
	\subsection{Eigenvalues of Balanced Dual Unit Gain Cycles}\label{cycles}
	
	We continue to test large-scale dual complex and dual quaternion unit gain  cycles.
	We use the default command {`eig' in MATLAB} to compute {all} eigenvalues and eigenvectors of a complex matrix and the package {`qtmf' }\footnote{\url{https://qtfm.sourceforge.io/}} to compute all eigenpairs of a quaternion matrix.
	{We first generate $n$ unit dual complex or dual quaternion numbers $q_i$, $i=1,\dots,n$.  Then we define the gain of each edge to be $\phi(e_{i,i+1}) = q_i^*q_{i+1}$ for $i=1,\dots,n-1$, and  $\phi(e_{n,1})=q_n^*q_1$. In this way, the corresponding cycles are balanced.}
	For {balanced} dual complex unit gain  {cycles and} dual quaternion unit gain {cycles}, the eigenvalues of the Laplacian {matrices  have} closed from solutions {\cite{CLQW24} as follows,
		\begin{equation}\label{equ:Lap_eig_cycle}
			\sigma_L(\Phi) =  \left\{2-2\cos\left(\frac{\theta+2\pi j}{n}\right):\ j\in\{0,\dots,n-1\}\right\}.
		\end{equation}
		{Let the number of vertices $n\in\{10,20,50,100,200,500\}$. {We} generate  random unit dual {elements} as the gains {and} then compute the eigenpairs by   Algorithm \ref{alg:SMM}}.
		Define the computational {residue} of  {Algorithm \ref{alg:SMM}} {by the $2^R$-norm} of our obtained eigenvalues compared with the closed-form values.
		In Table \ref{table:cycle}, 	we report the CPU time and {residue (RES)}  for computing all eigenvalues and eigenvectors {of dual complex unit gain graphs (DCUGG) and dual quaternion unit gain graphs (DQUGG)}.
		From this table, we  can see  that our proposed method is fast and accurate.
	}
	
	\begin{table}[t]
		\caption{Numerical results for dual complex and dual quaternion unit gain  cycles.}\label{table:cycle}
		\begin{tabular}{c|c|cccccc}
			\toprule
			Method & $n$& 10 & 20 & 50 & 100 & 200 & 500  \\ \hline
			& \multicolumn{6}{c}{DCUGG} \\\hline
			{Algorithm \ref{alg:SMM}} & CPU (s) & 2.15e$-$03 & 2.47e$-$03 & 1.41e$-$02 & 8.09e$-$02 & 8.18e$-$01  &2.49e+01   \\
			& {RES} & 4.23e$-$15 & 6.93e$-$15 & 1.18e$-$14 & 9.95e$-$15  &1.19e$-$14 & 1.60e$-$14 \\
			\hline
			& \multicolumn{6}{c}{DQUGG} \\\hline
			{Algorithm \ref{alg:SMM}}& CPU (s) & 1.51e$-$02 & 2.77e$-$02&  1.26e$-$01 &  4.15e$-$01  & 3.22e+00 &  8.27e+01  \\
			&  {RES} & 7.43e$-$15 & 1.09e$-$14 & 4.48e$-$14& 8.12e$-$14  & 3.00e$-$13  & 6.99e$-$13   \\
			\bottomrule
		\end{tabular}
	\end{table}

	\subsection{Balance of Dual Unit Gain Graphs}
	
	{We verify the  dual unit gain graphs are balanced} or not by Theorem \ref{thm:balanced}. We
	{check the first two conditions of Theorem \ref{thm:balanced} manually and define the residue of the third condition by}
	\begin{equation}
		Err = \|Y^*LY- L_G\|_{F^R},
	\end{equation}
	Here, $L$ and $L_G$ are the Laplacian matrices of the gain graph and the underlying graph, respectively.
	{We say the gain graph is balanced if  the first two conditions of Theorem \ref{thm:balanced} hold true and $Err$ is less than a threshold. We set the threshold as as $10^{-8}$ in our numerical experiments.}

	\begin{example}
		We first verify the {balance} of  $\Phi_A$ and $\Phi_B$ in Examples 7.1 and 7.2.
		The Laplacian matrices of $G$, $\Phi_A$, $\Phi_B$ are
		\[L_G =  \begin{bmatrix}
			2 & -1 & -1  \\
			-1  & 2 &  -1 \\
			-1  &  -1  & 2
		\end{bmatrix},\quad L_A =  \begin{bmatrix}
			2 & -1-i\epsilon & -1+2i\epsilon \\
			-1+i\epsilon & 2 &  -1+i\epsilon  \\
			-1-2i\epsilon &  -1 -i\epsilon  & 2
		\end{bmatrix},	\]
		\[L_B =  \begin{bmatrix}
			2 & -1-i\epsilon & -1  \\
			-1+i\epsilon & 2 &  -1+i\epsilon  \\
			-1 &  -1 -i\epsilon  & 2
		\end{bmatrix},\]
		respectively.
		The eigenvalues of $L_A$ are $\{0,3,3\}$, the eigenvector corresponding to the zero eigenvalue is
		\[\vx_A = [0.5774-0.1925i \epsilon, 0.5774- 0.3849i\epsilon, 0.5774+0.5774i\epsilon]^\top.\]
		From this, we conclude that the first two conditions of Theorem \ref{thm:balanced} hold true and  $Err_A = 1.6330$. Hence, $\Phi_A$ is not balanced.
		The eigenvalues of $L_B$ are $\{0,3,3\}$, the eigenvector corresponding to the zero eigenvalue is
		\[\vx_B = [0.5774+0.1925i \epsilon, 0.5774- 0.3849i\epsilon, 0.5774+0.1925i\epsilon]^\top.\]
		From this, we see that  the first two conditions of Theorem \ref{thm:balanced} hold true and $Err_B = 2.22e-16$. Hence, $\Phi_B$ is   balanced. 	
	\end{example}

		\begin{table}[t]
		\centering
		\caption{Numerical results for verifying the {balance} of the dual   unit gain  cycles.}\label{table:cycle_balance}
		\begin{tabular}{c|cccccc}
			\toprule
			$n$& 10 & 20 & 50 & 100 & 200 & 500  \\ \hline
			\multicolumn{6}{c}{DCUGG} \\\hline
			CPU (s) & 7.78e$-$04 & 1.79e$-$03 & 1.36e$-$02 & 8.15e$-$03 & 5.54e$-$02 & 1.96e$-$01   \\
			Err &6.92e$-$15 & 3.02e$-$14 & 4.04e$-$14 & 1.69e$-$13 & 1.34e$-$13 & 1.36e$-$12\\
			\hline
			\multicolumn{6}{c}{DQUGG} \\\hline
			CPU (s) & 1.20e$-$02 & 2.60e$-$02 & 7.47e$-$02 & 2.57e$-$01 & 1.28e+00 & 4.13e+01  \\
			Err & 9.81e$-$15 & 2.35e$-$14 & 4.96e$-$14 & 1.20e$-$13 & 4.50e$-$13 & 1.47e$-$12  \\
			\bottomrule
		\end{tabular}
	\end{table}

	We continue to test the cycles in Section~\ref{cycles}.
	By Table~\ref{table:cycle_balance}, we see that all results can be obtained in $5$ seconds, and the residue is less than $10^{-11}$ for all examples. {These results show that we can verify the balance of the dual unit gain graphs efficiently.}
	
\bigskip
	
	{\section{Concluding Remarks}
	
	In this paper, we have made the following contributions.
	
	\begin{itemize}
	
	\item We studied dual number symmetric matrices, dual complex Hermitian matrices and dual quaternion Hermitian matrices in a {\bf unified frame} as dual Hermitian matrices.  This avoided unnecessary repetitions.
	
	\item We proposed a {\bf practical method} - The Supplement Matrix Method, for computing eigenvalues of a dual Hermitian matrix.
	
	\item We raised a {\bf meaningful application problem} - The Relative Configuration Problem,  in multi-agent formation control.
	
	\item We explored a {\bf cross-disciplinary approach} to solve the above problem.   This approach combines the spectral theory of dual Hermitian matrices, and the unit gain graph theory.  While the unit gain graph theory is well-developed in spectral graph area, what we used here are about dual quaternion unit gain graphs.  This is the first discussion on dual quaternion unit gain graphs in the literature.
Finally, the supplement matrix method was used in this approach.
	
	\end{itemize}

We will continue on this path, namely exploring on the application oriented research.
	
	
	
	
	

	}

	\bigskip
	


\end{document}